\newtheorem{theorem}{Theorem}
  \newtheorem{lemma}[theorem]{Lemma}
   \newtheorem{corollary}[theorem]{Corollary}
  \theoremstyle{definition}
\newcommand{\RR}{\mathbb R}
\newcommand{\QQ}{\mathbb Q}
\DeclareMathOperator{\trop}{trop}
\DeclareMathOperator{\val}{val}
\date{\today}
 \title[]{Tropicalizing the positive semidefinite cone}
 \author{Josephine Yu}
\address{}
\email {}
\date{\today}
\begin{document}
 \maketitle
 
 \begin{abstract}
 We study the tropicalization of the cone of positive semidefinite matrices over the ordered field of real Puiseux series.  
The tropical PSD matrices form the normal cone of the Newton polytope of the symmetric determinant at the vertex corresponding to the product of diagonal entries.  We find generators and defining inequalities of the cone. The PSD tropical quadratic forms are those that induce the trivial subdivision on the standard simplex dilated by two.  We also show that the tropical PSD cone is the tropical convex hull of the set of symmetric matrices of tropical rank one and that every tropical PSD matrix can be factored as a tropical product of a matrix and its transpose.
 \end{abstract}

\medskip

 \section{Introduction}


Let $R =\bigcup_{n \geq 1} \RR((t^{1/n}))$ be the field of Puiseux series with real coefficients, where $\RR((t))$ denotes the field of formal Laurent series in $t$ with coefficients in $\RR$.  There is a non-archimedean valuation $\val : R^* = R \backslash \{0\} \rightarrow \QQ$ where $\val(a)$ is the lowest exponent appearing with a non-zero coefficient in $a$.  The field $R$ is an ordered field, with $a > b$ whenever the leading coefficient of $a-b$ is positive.  The map $\val$ naturally extends to $(R^*)^N$ coordinatewise for any positive integer $N$.  For any subset $S$ of $(R^*)^N$, let $\trop(S) = \{\val(x) : x \in S\} \subset \QQ^N$.  This notion of tropicalization was used in \cite{DevelinYu} to study tropical polytopes.

Let $\Sigma_n$ be the cone of $n \times n$ positive semidefinite (PSD) matrices over $R$, consisting of the matrices all of whose principal minors are non-negative. Let $\Sigma_n^*$ be the subset of $\Sigma_n$ consisting of the PSD matrices without any zero entries.  Our main object of interest is its tropicalization
 $$
\trop(\Sigma_n^*) = \{ \val(A) : A \in \Sigma_n^* \}.
$$
We will refer to the matrices in this set as the {\em tropical PSD matrices}.

\section{Generators and defining inequalities of $\trop(\Sigma_n^*)$}

The set $\trop(\Sigma_n^*)$ lies in the space of $n \times n$ symmetric matrices over $\QQ$.  For any $1\leq i,j \leq n$, let $e_{ij}$ be the matrix with $1$ at the $(i,j)$ and $(j,i)$ entries and $0$ everywhere else.

We will work with the tropical semiring $(\QQ, \oplus, \odot)$ where $a \oplus b = \min(a,b)$ and $a \odot b = a+b$. The {\em tropical determinant} of an $n \times n$ matrix $[a_{ij}]$ is obtained from the usual determinant by replacing addition and multiplication with $\oplus$ and $\odot$ respectively, disregarding the signs.  Thus it is the expression
$$
\min_{\sigma \in S_n}  (a_{1 \sigma(1)} + a_{2 \sigma(2)} + \dots + a_{n \sigma(n)})
$$
where $S_n$ denotes the group of permutations of $\{1,2,\dots,n\}$. 

\begin{theorem}
\label{thm:cone}
  The set $\trop(\Sigma_n^*)$ consists of all symmetric tropical matrices for which the minimum in the tropical determinant is attained at the identity permutation.  It is a closed polyhedral cone defined by inequalities
\begin{equation}
\label{eqn:ineq}
x_{ii} + x_{jj} \leq 2 x_{ij} \text{ for }i \neq j.
\end{equation}
It is generated as a cone by the rays $\{e_{ij} : i \neq j\}$ together with a lineality space spanned by $\{2 e_{ii} + \sum_{j \neq i} e_{ij} : 1 \leq i \leq n \}$. 
\end{theorem}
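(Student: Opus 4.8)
The plan is to prove the theorem in four steps, showing that the three descriptions --- membership in $\trop(\Sigma_n^*)$, the tropical determinant attaining its minimum at the identity, and the inequalities \eqref{eqn:ineq} --- all coincide, and then reading off the generators. I would begin with the purely combinatorial equivalence: for a symmetric $X = [x_{ij}] \in \QQ^{n \times n}$, the minimum of $\min_{\sigma} \sum_i x_{i \sigma(i)}$ is attained at $\sigma = \mathrm{id}$ if and only if $x_{ii} + x_{jj} \le 2 x_{ij}$ for all $i \neq j$. For the forward implication I would test $\sigma = \mathrm{id}$ against the transposition $(i\,j)$ and use $x_{ij} = x_{ji}$ to recover \eqref{eqn:ineq} exactly. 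For the converse I would decompose an arbitrary $\sigma$ into disjoint cycles and show each cycle $(i_1\,i_2\,\cdots\,i_k)$ contributes at least $\sum_\ell x_{i_\ell i_\ell}$, since summing $2 x_{i_\ell i_{\ell+1}} \ge x_{i_\ell i_\ell} + x_{i_{\ell+1} i_{\ell+1}}$ around the cycle telescopes to $\sum_\ell x_{i_\ell i_{\ell+1}} \ge \sum_\ell x_{i_\ell i_\ell}$.

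Next I would establish necessity. If $A = [a_{ij}] \in \Sigma_n^*$, then each $2 \times 2$ principal minor gives $a_{ii} a_{jj} \ge a_{ij}^2 > 0$ in the ordered field $R$, the positivity of $a_{ij}^2$ coming from $a_{ij} \neq 0$. I would invoke the monotonicity of $\val$ on positive elements, namely that $u \ge v > 0$ forces $\val(u) \le \val(v)$ (because $0 < v/u \le 1$ has non-negative valuation, as a positive element bounded by $1$ cannot have a negative leading exponent), to conclude $x_{ii} + x_{jj} \le 2 x_{ij}$. Thus $\trop(\Sigma_n^*)$ is contained in the cone cut out by \eqref{eqn:ineq}.

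The main obstacle is the sufficiency, or realizability, direction: every $X$ satisfying \eqref{eqn:ineq} equals $\val(A)$ for some $A \in \Sigma_n^*$. My construction would first reduce to the diagonal-free case by a diagonal congruence. Setting $D = \mathrm{diag}(t^{x_{11}/2}, \dots, t^{x_{nn}/2})$ and writing $A = D B D$, one has $\val(A) = X$ exactly when $B$ has valuation matrix $Y$ with $y_{ii} = 0$ and $y_{ij} = x_{ij} - (x_{ii} + x_{jj})/2 \ge 0$; since $\det(A_S) = \big(\prod_{i \in S} d_i^2\big)\det(B_S)$ for every index set $S$, the matrices $A$ and $B$ have principal minors of the same sign, so $A \in \Sigma_n^*$ iff $B$ is. I would then take $b_{ii} = 1$ and $b_{ij} = \epsilon\, t^{y_{ij}}$ off the diagonal for a small nonzero real $\epsilon$, so that all entries are nonzero with the prescribed valuations. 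The specialization $t = 0$ sends $B$ to $B_0 = I + \epsilon N_0$, where $N_0$ is the symmetric $0/1$ matrix recording the tight pairs $y_{ij} = 0$; for $\epsilon$ small this is positive definite, hence every principal minor of $B_0$ is a positive real, so every principal minor of $B$ has valuation $0$ with positive leading coefficient and is therefore positive in $R$. Thus $B$, and with it $A$, has all principal minors positive and no zero entries, giving $A \in \Sigma_n^*$ with $\val(A) = X$. Combined with the first two steps, this proves the three descriptions agree.

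Finally I would read off the generators. The lineality space of the cone \eqref{eqn:ineq} consists of those $X$ for which all inequalities hold with equality, i.e.\ $x_{ij} = u_i + u_j$ for some $u \in \QQ^n$; this is precisely the span of the matrices $L_i := 2 e_{ii} + \sum_{j \neq i} e_{ij}$, which I would verify are linearly independent so that the span has dimension $n$. Given any $X$ in the cone, subtracting the lineality element with $u_i = x_{ii}/2$ normalizes it to a symmetric matrix with zero diagonal and off-diagonal entries $y_{ij} \ge 0$, so that $X = \sum_{i<j} y_{ij} e_{ij} + (\text{lineality})$. Since each $e_{ij}$ with $i \neq j$ and the entire lineality space satisfy \eqref{eqn:ineq}, this exhibits the cone as $\mathrm{cone}\{e_{ij} : i \neq j\}$ together with the lineality space spanned by the $L_i$; closedness and polyhedrality are immediate from its definition by finitely many non-strict linear inequalities.
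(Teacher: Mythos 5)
Your proof is correct, and most of it runs parallel to the paper's: the inequalities come from the $2\times 2$ principal minors, the equivalence with optimality of the identity permutation comes from splitting a permutation into cycles and summing the inequalities around each cycle, and the generators are read off by subtracting the diagonal (lineality) part. The genuine difference is in the realizability step, which is the heart of the theorem. The paper lifts $[a_{ij}]$ in one shot to $\alpha_{ij}=\pm t^{a_{ij}}$ for $i\neq j$ and $\alpha_{ii}=n!\,t^{a_{ii}}$, and uses the fact that the identity is optimal in every principal tropical minor: in each $k\times k$ principal minor the diagonal term contributes coefficient $n!^{k}$ at the lowest exponent, while the remaining $k!-1$ terms have coefficients of absolute value at most $n!^{k-2}$, so no cancellation can occur and every principal minor is positive. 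You instead normalize first by the congruence $A=DBD$ with $D=\mathrm{diag}(t^{x_{11}/2},\dots,t^{x_{nn}/2})$ --- legitimate, since this scales every principal minor by a positive element of $R$ --- and then lift the normalized matrix to $I+\epsilon\,[t^{y_{ij}}]$, deducing positivity of all principal minors from two facts: specialization at $t=0$ is a ring homomorphism on the valuation ring, and $I+\epsilon N_0$ (with $N_0$ the $0/1$ matrix of tight pairs) is positive definite for small real $\epsilon$. Your route trades the paper's coefficient counting for congruence invariance of the signs of principal minors plus openness of the positive definite cone over $\RR$, and it never needs the permutation characterization to build the lift, only the inequalities. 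What the paper's blunter construction buys is the flexibility exploited in the remark immediately following the theorem: the off-diagonal signs can be prescribed arbitrarily, so $\trop(\Sigma_n^*)=\trop(\Sigma_n^*\cap C)$ for any sign orthant $C$. Your construction yields this too if each off-diagonal perturbation is allowed its own sign $\pm\epsilon$ (the perturbation argument is indifferent to signs), but as written you fixed a single positive $\epsilon$, so that extra conclusion would need a sentence of justification; both constructions do give the other point of that remark, namely that every tropical PSD matrix lifts to a matrix with all principal minors strictly positive.
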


\begin{proof}
Let $[a_{ij}]$ be a symmetric matrix in $\trop\Sigma_n^*$. Then $[a_{ij}] = \val([\alpha_{ij}])$ for some PSD matrix $[\alpha_{ij}]$ over $R$.  For any $i \neq j$, $\alpha_{ii}\alpha_{jj} - \alpha_{ij}^2 \geq 0$, so $a_{ii} + a_{jj} = \val(\alpha_{ii}\alpha_{jj}) \leq \val(\alpha_{ij}^2) = 2 a_{ij}$.

Suppose that the minimum in the tropical determinant of $A$ is attained at a permutation $\sigma$ containing a cycle $(1,2,\dots,k)$.
Since $a_{ii}+a_{jj} \leq 2 a_{ij}$ for all $i \neq j$, we have
\begin{align*}a_{11} + a_{22}+\cdots +  a_{kk} &= \frac{1}{2} \left( (a_{11}+a_{22}) + (a_{22}+a_{33}) + \cdots + (a_{kk}+a_{11}) \right) \\
& \leq a_{12} + a_{23} + \cdots + a_{k1}.
\end{align*}
Hence replacing the cycle $(1,2,\dots,k)$  with the identity permutation in $\sigma$ gives another permutation attaining the minimum in the tropical determinant.  After removing all the cycles this way, we see that the minimum is attained at the identity permutation as well.

On the other hand, let $[a_{ij}]$ be a symmetric tropical matrix for which the minimum in the tropical determinant is attained at the identity permutation.  Then the minimum in every principal minor is attained at the identity permutation as well.  Let $[\alpha_{ij}]$ be a symmetric matrix over $R$ where $\alpha_{ij} = \pm t^{a_{ij}}$ for $i \neq j$ and $\alpha_{ii} = n! t^{a_{ii}}$.  Then for every principal minor of $[\alpha_{ij}]$, the product of diagonal entries has the lowest exponent and the coefficient is too large to be canceled out by the other terms.  Thus all the principal minors are positive, and $[\alpha_{ij}]$ is a PSD matrix whose valuation is $[a_{ij}]$.

We have shown that a matrix is tropical PSD if and only if it satisfies the inequalities (\ref{eqn:ineq}).  The matrices $2 e_{ii} + \sum_{j \neq i} e_{ij}$ attain all the inequalities at equality, so they span the lineality space of $\trop(\Sigma_n^*)$.
It is clear that each $e_{ij}$ for $i\neq j$ satisfies (\ref{eqn:ineq}).  On the other hand, for any symmetric matrix satisfying (\ref{eqn:ineq}), the off-diagonal entries can be made smaller so that all the inequalities in (\ref{eqn:ineq}) are attained at equality.  This shows that $\trop(\Sigma_n^*)$ is positively spanned by $\{e_{ij} : i\neq j\}$ and the lineality space. 
\end{proof}

In fact, it follows from the proof that $\trop(\Sigma_n^*) = \trop(\Sigma_n^* \cap C)$ where $C$ is any orthant in the space of symmetric matrices corresponding to a choice of signs for the off-diagonal entries, because we can lift a tropical PSD matrix to a matrix in $\Sigma_n^*$ with any desired sign pattern.  Note that even the matrices on the boundary of $\trop(\Sigma_n^*)$ can be lifted to matrices in the interior of $\Sigma_n^*$.

The theorem above says that $\trop(\Sigma_n^*)$ is the normal cone of the Newton polytope of the symmetric determinant at the vertex corresponding to the identity permutation.  This polytope is the projection of the Birkoff polytope onto the space of symmetric matrices.  It is linearly isomorphic to the polytope of symmetric doubly stochastic matrices.

\section{PSD quadratic forms and triangulations of $2 \Delta_{n-1}$}

The $n \times n$ symmetric matrices can be identified with quadratic forms in $n$ variables via the correspondence $A \mapsto y^T A y$ where $y$ is the column vector of $n$ variables.  We will call a tropical quadratic form PSD if it can be written as $y^T \odot A \odot y$ for some tropical PSD matrix $A$, where $\odot$ denotes the tropical matrix multiplication.

The determinant of the $n \times n$ generic symmetric matrix is the discriminant of quadratic forms.  Analogously to its classical counterpart, it follows from Theorem~\ref{thm:cone} that the tropical PSD cone $\trop(\Sigma_n^*)$ is the closure of a connected component in the complement of the tropical discriminant.  Thus the tropical PSD quadratic forms can be characterized by subdivisions they induce on the Newton polytope \cite{GKZ, DFS}.

Let $2 \Delta_{n-1}$ be the polytope consisting of points with non-negative coordinates that sum to $2$.  Any tropical quadratic polynomial corresponding to a symmetric matrix $[a_{ij}]$ induces a  subdivision on $2 \Delta_{n-1}$ as follows.  First lift the point $e_i + e_j$ to height $a_{ij}$ for each pair $i,j$, then take the convex hull.  The projection of the lower faces forms a subdivision of $2 \Delta_{n-1}$.  We call a subdivision {\em trivial} if it consists of a single maximal cell.
\newpage
\begin{theorem}
\label{thm:subdivision}
The tropical PSD quadratic forms are those that induce the trivial subdivision on $2 \Delta_{n-1}$.
\end{theorem}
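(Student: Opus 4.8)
The plan is to show that the regular subdivision of $2\Delta_{n-1}$ induced by $[a_{ij}]$ is trivial exactly when the inequalities (\ref{eqn:ineq}) hold, after which Theorem~\ref{thm:cone} identifies these height functions with the tropical PSD forms. First I would record that the lattice points of $2\Delta_{n-1}$ are precisely the vertices $2e_i$, lifted to height $a_{ii}$, together with the edge midpoints $e_i+e_j$ with $i\neq j$, lifted to height $a_{ij}$. Since $2\Delta_{n-1}$ spans the hyperplane $\{\sum_i x_i = 2\}$, every affine function on it can be written as $\ell(x)=\sum_i c_i x_i$, and a single lower face projecting onto all of $2\Delta_{n-1}$ is exactly the graph of such an $\ell$ for which $a_{ij}\ge \ell(e_i+e_j)$ at every lattice point.

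Next I would establish both directions of the equivalence. A vertex of a polytope is always a vertex of the lower hull of any lift, so the lifted vertices $(2e_i,a_{ii})$ are always vertices of the lower hull; if the subdivision is trivial they are the vertices of its unique lower facet, forcing $\ell(2e_i)=a_{ii}$ and hence $c_i=a_{ii}/2$. Substituting into the supporting inequality $a_{ij}\ge \ell(e_i+e_j)=c_i+c_j$ yields $2a_{ij}\ge a_{ii}+a_{jj}$, which is precisely (\ref{eqn:ineq}). Conversely, assuming (\ref{eqn:ineq}), I would set $\ell(x)=\sum_i (a_{ii}/2)\,x_i$; then the lifted vertices lie on its graph while every remaining lifted lattice point lies weakly above it, so the graph of $\ell$ over $2\Delta_{n-1}$ is a single lower facet and the induced subdivision is trivial. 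Combined with Theorem~\ref{thm:cone}, which identifies (\ref{eqn:ineq}) with membership in $\trop(\Sigma_n^*)$, this finishes both implications.

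The substantive content of the argument is the exact match between the supporting-hyperplane condition and (\ref{eqn:ineq}); the step that requires the most care is the combinatorial bookkeeping for the \emph{only if} direction. I would need to confirm that in the trivial subdivision no interior lattice point $e_i+e_j$ can dip below the hyperplane spanned by the lifted vertices without creating a second maximal cell, and to treat the boundary case $2a_{ij}=a_{ii}+a_{jj}$, where $e_i+e_j$ lies on the facet: here the subdivision remains trivial because the point merely sits on the single maximal cell rather than refining it. This is the one place where the standard theory of regular subdivisions must be applied with attention rather than quoted directly.
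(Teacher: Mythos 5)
Your proof is correct and takes essentially the same approach as the paper: both reduce the statement to Theorem~\ref{thm:cone} and identify the inequalities (\ref{eqn:ineq}) with the condition that each lifted point $e_i+e_j$ lies weakly above the affine span of the lifted vertices $2e_i$. Your write-up is in fact more complete than the paper's, which only argues explicitly that (\ref{eqn:ineq}) forces the trivial subdivision and leaves the converse direction (which you carry out via $c_i = a_{ii}/2$) implicit.
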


\begin{proof}
For a matrix $[a_{ij}] \in \trop(\Sigma_n^*)$, we have $a_{ii} + a_{jj} \leq 2 a_{ij}$ for any $i \neq j$, so the point $e_i+e_j \in 2 \Delta_{n-1}$ is lifted at least as high as the midpoint of the line segment between lifts of $2 e_i$ and $2 e_j$.  It follows that the only lower facet is the convex hull of the lifts of $2 e_i$ for $i=1,\dots,n$.
\end{proof}

\section{Tropical generators and symmetric Barvinok rank}

A tropical matrix $A = [a_{ij}]$ has tropical rank one if $a_{ij}+a_{kl} = a_{il}+a_{kj}$ for all $i \neq k$ and $j\neq l$; in other words, the minimum is attained twice in all $2 \times 2$ tropical minors of $A$.  A matrix has tropical rank one if and only if it can be written as $u \odot v^T$ for some column vectors $u$ and $v$, i.e. it has Barvinok rank one \cite{DSS}.  

\begin{lemma}
\label{lem:rank1}
The space of $n \times n$ symmetric matrices of tropical rank one is a linear space spanned by $\{2 e_{ii} + \sum_{j \neq i} e_{ij} : i = 1,2,\dots,n\}$.
\end{lemma}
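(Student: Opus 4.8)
The plan is to use the factorization characterization recalled immediately above the statement: a tropical matrix $A = [a_{ij}]$ has tropical rank one if and only if $a_{ij} = u_i + v_j$ for some column vectors $u = (u_1,\dots,u_n)$ and $v = (v_1,\dots,v_n)$. The strategy is to impose the symmetry constraint on such a factorization, show that it forces $A$ into the special shape $a_{ij} = s_i + s_j$, and then recognize this family as exactly the span of the given generators. Since a span is automatically a linear space, this simultaneously yields both assertions of the lemma — the linearity and the explicit spanning set.

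First I would record how the generators act. Writing $g_i := 2 e_{ii} + \sum_{j \neq i} e_{ij}$, a direct reading of the definition of $e_{ij}$ shows that the $(k,l)$ entry of $g_i$ equals $[k=i] + [l=i]$; that is, it is $2$ when $k=l=i$, it is $1$ when exactly one of $k,l$ equals $i$, and it is $0$ otherwise. Consequently, for any scalars $s_1,\dots,s_n$ the matrix $\sum_i s_i g_i$ has $(k,l)$ entry $s_k + s_l$, which specializes to $2 s_k$ on the diagonal. Such a matrix is visibly symmetric and factors as $s \odot s^T$, so it has tropical rank one. This gives the inclusion of the span into the set of symmetric rank-one matrices.

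For the reverse inclusion, I would start from an arbitrary symmetric matrix of tropical rank one, factor it as $a_{ij} = u_i + v_j$, and use $a_{ij} = a_{ji}$ to deduce $u_i - v_i = u_j - v_j$ for all $i,j$. Hence $u_i - v_i$ is a constant $c$, and $a_{ij} = v_i + v_j + c$. Setting $s_i := v_i + c/2$ absorbs the constant and yields $a_{ij} = s_i + s_j$, which by the previous paragraph is precisely $\sum_i s_i g_i$. Thus every symmetric rank-one matrix lies in the span, and the two sets coincide.

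The computation is elementary throughout; the only points requiring care are on the diagonal, where the generator $g_i$ carries the coefficient $2$ on $e_{ii}$, and in the constant $c$, which must be split evenly between the two indices to be folded into a single vector $s$. This even splitting — and, more conceptually, the collapse of the two factors $u,v$ into one under symmetry — is what makes the rank-one locus a genuine linear space rather than the nonlinear variety one typically expects from a rank condition.
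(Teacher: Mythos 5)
Your proof is correct, but it takes a genuinely different route from the paper's in the key direction. For the containment of the span in the rank-one locus, the two arguments are essentially the same: the paper says each $\sum_i s_i g_i$ is obtained from the all-zero matrix by simultaneously tropically scaling row $i$ and column $i$, which preserves tropical rank; your explicit factorization $s \odot s^T$ is the same content phrased multiplicatively. The real divergence is in the reverse containment. The paper specializes the $2\times 2$ minor equations defining tropical rank one (at $k=j$, $l=i$, using symmetry) to get $x_{ii}+x_{jj} = 2x_{ij}$, and then asserts that these linear equations cut out exactly the span of the $g_i$. You instead invoke the Barvinok factorization $A = u \odot v^T$ (which the paper recalls with a citation to DSS just before the lemma, so it is fair to use) and show that symmetry forces $u - v$ to be a constant vector $c$, which you split evenly to write $A = s \odot s^T$ with $s_i = v_i + c/2$. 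Your route is more explicit and closes a step the paper leaves to the reader: verifying that the solution set of $x_{ii}+x_{jj}=2x_{ij}$ equals the span requires essentially your parameterization $x_{ij} = x_{ii}/2 + x_{jj}/2$, i.e.\ $s_i = x_{ii}/2$. The paper's route, on the other hand, ties the lemma conceptually to Theorem~\ref{thm:cone}: the equations $x_{ii}+x_{jj}=2x_{ij}$ are the defining inequalities of $\trop(\Sigma_n^*)$ held at equality, making it visible that the symmetric rank-one matrices are precisely the lineality space of the tropical PSD cone. Note that both arguments quietly use that the ambient semiring is over $\QQ$, where division by $2$ is available (your $c/2$, the paper's implicit $x_{ii}/2$).
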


\begin{proof}
All matrices in the linear space is obtained from the all-zero matrix, which has tropical rank one, by tropically scaling rows and columns simultaneously, and tropical scaling does not change the tropical rank. On the other hand, all symmetric tropical rank one matrices satisfy the equations $x_{ii}+x_{jj} = 2 x_{ij}$ for $i \neq j$ that cut out the linear space under consideration.  
\end{proof}

The tropical convex hull of a set consists of all finite tropical linear combinations of the set.  It is the analogue of positive hull in classical convexity theory.

\begin{theorem}
\label{thm:rank1}
The set $\trop(\Sigma_n^*)$ is the tropical convex hull of the set of symmetric matrices of tropical rank one.  
\end{theorem}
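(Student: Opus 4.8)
The plan is to prove the two inclusions separately, after first making the right-hand side concrete. A symmetric tropical matrix has tropical rank one exactly when it has the form $M^u := u \odot u^T$, that is $M^u_{ij} = u_i + u_j$ for some $u \in \QQ^n$; by Lemma~\ref{lem:rank1} these are precisely the matrices of the lineality space $L$ spanned by the $2e_{ii} + \sum_{j\neq i} e_{ij}$. Since $L$ is closed under tropical scaling (as $\lambda \odot M^u = M^{u + (\lambda/2)\mathbf{1}}$ is again of this form), every finite tropical linear combination of rank-one matrices is simply an entrywise minimum $\bigoplus_\alpha M^{u^{(\alpha)}}$ of finitely many of them. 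Thus I must show that these entrywise minima are exactly the matrices of $\trop(\Sigma_n^*)$.

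For the inclusion ``$\supseteq$'', let $C = \bigoplus_\alpha M^{u^{(\alpha)}}$ and fix $i \neq j$. Choosing an index $\alpha_0$ attaining the minimum that defines $c_{ij}$, so that $c_{ij} = u^{(\alpha_0)}_i + u^{(\alpha_0)}_j$, and using $c_{ii} \leq 2 u^{(\alpha_0)}_i$ and $c_{jj} \leq 2 u^{(\alpha_0)}_j$, I would bound
\begin{align*}
c_{ii} + c_{jj} &\leq 2 u^{(\alpha_0)}_i + 2 u^{(\alpha_0)}_j \\
&= 2\bigl(u^{(\alpha_0)}_i + u^{(\alpha_0)}_j\bigr) = 2 c_{ij}.
\end{align*}
Hence $C$ satisfies the defining inequalities (\ref{eqn:ineq}) and lies in $\trop(\Sigma_n^*)$ by Theorem~\ref{thm:cone}.

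The substantive direction is ``$\subseteq$''. Given $A \in \trop(\Sigma_n^*)$, I want, for each pair $(p,q)$, a vector $u = u^{(p,q)}$ whose rank-one matrix dominates $A$ entrywise while being tight at $(p,q)$: namely $u_i + u_j \geq a_{ij}$ for all $i,j$ and $u_p + u_q = a_{pq}$. Granting this, each $M^{u^{(p,q)}} \geq A$, so their entrywise minimum is $\geq A$; and the minimum equals $a_{pq}$ at entry $(p,q)$, so it is exactly $A$, exhibiting $A$ as a tropical combination of rank-one matrices. To produce $u^{(p,q)}$ I would solve the linear program of minimizing $u_p + u_q$ subject to $u_i + u_j \geq a_{ij}$ for all $i \leq j$; any optimal point is the desired vector, provided the optimal value equals $a_{pq}$.

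The crux is computing this optimal value, and here the tropical PSD condition enters decisively. I expect the main obstacle to be precisely that the naive guess $u_i = a_{ip} - \tfrac{1}{2}a_{pp}$ (the ``$p$-th column'' of $A$) fails: it is tight at $(p,q)$, but dominating $A$ would require $a_{ip} + a_{jp} - a_{pp} \geq a_{ij}$, which is false in general. Instead I would pass to the dual program: maximize $\sum a_{ij} y_{ij}$ over $y \geq 0$ subject to $2y_{ii} + \sum_{j\neq i} y_{ij} = c_i$, where $c = e_p + e_q$. The degree constraints force $y$ to be supported on $\{p,q\}$, reducing the objective to $\tfrac{a_{pp}+a_{qq}}{2} + y_{pq}\bigl(a_{pq} - \tfrac{a_{pp}+a_{qq}}{2}\bigr)$ with $0 \leq y_{pq} \leq 1$. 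Since $a_{pq} - \tfrac{1}{2}(a_{pp}+a_{qq}) \geq 0$ by (\ref{eqn:ineq}), the maximum is attained at $y_{pq} = 1$ and equals $a_{pq}$, and strong duality then gives the primal value $a_{pq}$, as needed. The diagonal case $p=q$ is handled identically and only requires a rank-one matrix tight on a diagonal entry. It is exactly the sign of $a_{pq} - \tfrac{1}{2}(a_{pp}+a_{qq})$, that is, membership in the tropical PSD cone, that forces the dual weight onto the $(p,q)$ constraint and yields the value $a_{pq}$ rather than $\tfrac{1}{2}(a_{pp}+a_{qq})$.
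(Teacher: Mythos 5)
Your proof is correct, and it takes a genuinely different route from the paper's. The common starting point is Lemma~\ref{lem:rank1}: the symmetric tropical rank-one matrices are exactly the matrices $u \odot u^T$, so tropical combinations of them are entrywise minima of such matrices (your observation that tropical scalars can be absorbed, $\lambda \odot (u\odot u^T) = (u + \tfrac{\lambda}{2}\mathbf{1})\odot(u + \tfrac{\lambda}{2}\mathbf{1})^T$, is implicit in the paper as well). From there the paper translates the problem to the polytope $2\Delta_{n-1}$: rank-one matrices are affine linear functions on its lattice points, membership in the tropical convex hull means every lifted point $(e_i+e_j,\,a_{ij})$ lies on the upper hull, and the paper identifies this condition with the trivial lower subdivision so that Theorem~\ref{thm:subdivision} finishes the proof. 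You instead bypass Theorem~\ref{thm:subdivision} entirely and argue directly from the inequality description in Theorem~\ref{thm:cone}: the easy inclusion by the same midpoint estimate, and the hard inclusion by constructing, for each entry $(p,q)$, a rank-one matrix dominating $A$ and tight at $(p,q)$, whose existence you certify by LP duality (the dual support collapses onto $y_{pp}, y_{qq}, y_{pq}$, and the inequality $a_{pp}+a_{qq}\le 2a_{pq}$ pushes the optimum to $a_{pq}$ rather than $\tfrac12(a_{pp}+a_{qq})$; your computation checks out, including the diagonal case). In effect, your duality argument is an explicit proof of the one step the paper treats as immediate, namely that the inequalities (\ref{eqn:ineq}) (equivalently, the trivial lower subdivision) force every lifted point onto the upper hull; the paper's proof states the implication from upper hull to trivial subdivision, which is what the other inclusion needs, but the converse is exactly your supporting-hyperplane construction. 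What the paper's route buys is brevity and the conceptual link between the PSD cone, the tropical discriminant, and regular subdivisions; what yours buys is a self-contained and effective argument: the vectors $u^{(p,q)}$ are computable by linear programming and are precisely the affine functions cutting out the upper facets used in the decomposition algorithm sketched at the end of the paper. Your side remark that the naive choice $u_i = a_{ip} - \tfrac12 a_{pp}$ fails is also right: for instance, $n=3$ with zero diagonal, $a_{13}=a_{23}=0$ and $a_{12}=10$ is tropical PSD, yet $a_{13}+a_{23}-a_{33} < a_{12}$.
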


For the proof, we will use the ideas used by Develin to study tropical secant sets of linear spaces \cite{DevelinTropSecant}.

\begin{proof} 
A symmetric matrix $A = [a_{ij}]$ defines a function on the lattice points $2 \Delta_{n-1}$ by $e_i + e_j \mapsto a_{ij}$.  By Lemma~\ref{lem:rank1}, the tropical rank one symmetric matrices are precisely those that induce affine linear functions on $2 \Delta_{n-1}$.  The tropical linear combinations of the tropical rank one matrices give functions on $2 \Delta_{n-1}$ that arise as the minimum of some affine linear functions.  In other words, if we lift each point $e_i + e_j$ to height $a_{ij}$ and take the convex hull, all the lifted points must be contained in the upper faces.  This means that the lower hull gives the trivial subdivision of $2 \Delta_{n-1}$, and the result follows from Theorem~\ref{thm:subdivision}.
\end{proof}

The {\em symmetric Barvinok rank} of a symmetric tropical matrix $A$ is the minimum integer $r$ such that $A$ can be written as the tropical sum of $r$ tropical rank one symmetric matrices.  The symmetric Barvinok rank may be infinite \cite{CartwrightChan}.  The next results follows immediately from Theorem~\ref{thm:rank1}.

\begin{corollary}
A symmetric matrix is tropical PSD if and only if its symmetric Barvinok rank is finite.
\end{corollary}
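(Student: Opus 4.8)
The plan is to unwind the two definitions and observe that Theorem~\ref{thm:rank1} already does essentially all the work. By definition, the symmetric Barvinok rank of a symmetric tropical matrix $A$ is finite precisely when $A$ can be written as a tropical sum $A = M_1 \oplus M_2 \oplus \cdots \oplus M_r$ of finitely many symmetric tropical rank one matrices $M_k$. So the content of the corollary is exactly the equivalence between $A \in \trop(\Sigma_n^*)$ and the existence of such a finite expression.

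First I would recall that Theorem~\ref{thm:rank1} characterizes $\trop(\Sigma_n^*)$ as the tropical convex hull of the set of symmetric tropical rank one matrices, and that by definition this tropical convex hull consists of all finite tropical linear combinations $\bigoplus_k (\lambda_k \odot M_k)$, where the $\lambda_k \in \QQ$ are tropical scalars and each $M_k$ is a symmetric tropical rank one matrix. Thus $A$ is tropical PSD if and only if it admits such a finite combination.

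The one point deserving a remark is the apparent gap between a \emph{tropical linear combination}, which permits the scalars $\lambda_k$, and a \emph{tropical sum}, which does not. This gap is harmless: tropically scaling $M_k$ by $\lambda_k$ amounts to adding the constant $\lambda_k$ to every entry, and this operation preserves the defining equations $x_{ii} + x_{jj} = 2 x_{ij}$ of Lemma~\ref{lem:rank1}, so $\lambda_k \odot M_k$ is again a symmetric tropical rank one matrix. Absorbing each scalar into its matrix shows that every finite tropical linear combination of symmetric rank one matrices is itself a finite tropical sum of symmetric rank one matrices, and the converse is immediate. Hence membership in the tropical convex hull is equivalent to being expressible as a finite tropical sum of symmetric rank one matrices, that is, to having finite symmetric Barvinok rank.

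Combining the two equivalences yields the corollary. I do not anticipate any genuine obstacle here, since all the mathematical content is carried by Theorem~\ref{thm:rank1}; the only thing to verify with any care is the scalar-absorption observation above, and that rests solely on the invariance of tropical rank under tropical scaling already used in the proof of Lemma~\ref{lem:rank1}.
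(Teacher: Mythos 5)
Your proof is correct and follows exactly the route the paper takes: the paper states this corollary follows immediately from Theorem~\ref{thm:rank1}, with the scalar-absorption step (that $\lambda \odot M$ is again symmetric of tropical rank one, since adding a constant to every entry preserves the equations $x_{ii} + x_{jj} = 2x_{ij}$) left implicit. Your write-up simply makes that small, harmless detail explicit.
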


An $n \times n$ symmetric matrix is PSD if and only if it is the Gram matrix of $n$ vectors, i.e.\ the matrix of inner products among the vectors.  The analogous statement is true for tropical PSD matrices.

\begin{corollary}
A symmetric matrix is tropical PSD if and only if it can be factored as $B \odot B^T$ for some matrix $B$.
\end{corollary}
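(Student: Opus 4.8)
The plan is to read the factorization directly off Theorem~\ref{thm:rank1}, using the dictionary between tropical linear combinations of symmetric rank one matrices and tropical products of the form $B \odot B^T$. The key observation I would record first is that, by Lemma~\ref{lem:rank1}, a symmetric matrix has tropical rank one exactly when its $(i,j)$ entry has the form $w_i + w_j$ for some column vector $w = (w_1,\dots,w_n)$; equivalently, such a matrix equals $w \odot w^T$.

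For the forward direction, suppose $A$ is tropical PSD. By Theorem~\ref{thm:rank1} it lies in the tropical convex hull of the symmetric rank one matrices, so I would write $A = \bigoplus_{k=1}^r \lambda_k \odot M^{(k)}$ with each $M^{(k)}$ symmetric of tropical rank one and each $\lambda_k \in \QQ$. Writing $M^{(k)}_{ij} = w^{(k)}_i + w^{(k)}_j$ as above and absorbing the tropical scalar symmetrically by setting $b^{(k)}_i := w^{(k)}_i + \tfrac12 \lambda_k$, the combination becomes entrywise $a_{ij} = \min_{1 \le k \le r}\big(b^{(k)}_i + b^{(k)}_j\big)$. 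Letting $B$ be the $n \times r$ matrix whose $k$-th column is $b^{(k)}$, I would then compute $(B \odot B^T)_{ij} = \min_k (B_{ik} + B_{jk}) = \min_k (b^{(k)}_i + b^{(k)}_j) = a_{ij}$, so that $A = B \odot B^T$.

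Conversely, given any matrix $B$, say $n \times r$ with columns $b^{(1)},\dots,b^{(r)}$, I would observe that $(B \odot B^T)_{ij} = \min_k (b^{(k)}_i + b^{(k)}_j)$ exhibits $B \odot B^T$ as the tropical sum $\bigoplus_{k=1}^r b^{(k)} \odot (b^{(k)})^T$ of $r$ symmetric tropical rank one matrices. Hence $B \odot B^T$ lies in the tropical convex hull of the symmetric rank one matrices, and Theorem~\ref{thm:rank1} shows it is tropical PSD. This direction is essentially immediate.

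I do not anticipate a genuine obstacle, since the corollary is a reformulation of Theorem~\ref{thm:rank1}; the only point requiring care is the symmetrization. A priori a tropical rank one matrix is $u \odot v^T$ with possibly $u \neq v$, so for the factorization to take the symmetric shape $B \odot B^T$ rather than a general $B \odot C^T$, one must rewrite each symmetric factor as $w \odot w^T$ with a single vector and split the tropical scalar $\lambda_k$ evenly between the two copies. Lemma~\ref{lem:rank1} makes this transparent, since it already presents the symmetric rank one matrices in the form $w_i + w_j$, so the splitting is forced and the argument goes through cleanly.
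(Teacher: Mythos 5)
Your proof is correct and follows essentially the same route as the paper: invoke Theorem~\ref{thm:rank1} to write a tropical PSD matrix as a tropical linear combination of symmetric rank one matrices, use Lemma~\ref{lem:rank1} to write each such matrix as $w \odot w^T$, and split each tropical scalar in half across the two factors so that the vectors $c_k/2 \odot v_k$ become the columns of $B$. Your write-up merely makes explicit the entrywise computation and the converse direction, which the paper leaves implicit.
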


\begin{proof}
By Theorem~\ref{thm:rank1}, a symmetric matrix $A$ is tropical PSD if and only if $A = c_1 \odot (v_1 \odot v_1^T) \oplus \cdots \oplus c_r \odot (v_r \odot v_r^T)$ for some scalars $c_1,\dots,c_r$ and column vectors $v_1,\dots,v_r$.  This is equivalent to the existance of a matrix $B$ whose columns are $c_1/2 \odot v_1, \dots, c_r/2 \odot v_r$.  
\end{proof}

It may be necessary for the matrix $B$ to have more than $n$ columns, i.e.\ the symmetric Barvinok rank may be a finite number greater than $n$, but it is bounded above by $\max(n,\lfloor n^2/4 \rfloor)$  \cite{CartwrightChan}.

To compute such a decomposition of a tropical PSD matrix $A$, consider the convex hull of lattice points in $2 \Delta_{n-1}$ lifted according to $A$.  Choose a collection of upper facets that contain all the lifted lattice points in their union.  Each facet defines an affine linear function on $2 \Delta_{n-1}$, giving a tropical rank one symmetric matrix.  Their tropical sum is $A$.

\bibliographystyle{amsalpha}
\bibliography{mybib}

 \end{document}